\theoremstyle{definition}
\newtheorem{definition}{Definition}[section]
\newtheorem{theorem}[definition]{Theorem}
\newtheorem{corollary}[definition]{Corollary}
\newtheorem{remark}[definition]{Remark}
\newcommand{\sq}[1]{\ifx#1([\else\ifx#1)]%
  \else\message{invalid use of "sq"}\fi\fi}
\DeclareMathSymbol{\idot}{\mathbin}{operators}{`\.}
\begin{document}
\title{The Second Main Theorem with moving hypersurfaces in subgeneral position}
\author{Qili Cai}
\address{Department of Mathematics\newline
\indent University of Houston\newline
\indent Houston,  TX 77204, U.S.A.} 
\email{qcai3@cougarnet.uh.edu}
\author{Chin Jui Yang}
\address{Department of Mathematics\newline
	\indent University of Houston\newline
	\indent Houston,  TX 77204, U.S.A.} 
\email{cyang36@cougarnet.uh.edu}

\begin{abstract}
	In this paper, we prove a second main theorem for a holomorphic curve $f$ into $\mathbb P^N (\mathbb C)$ with a family of slowly moving hypersurfaces $D_1,...,D_q$ with respect to $f$ in $m$-subgeneral position, proving an inequality with factor $3 \over 2$. The motivation comes from the recent result of Heier and Levin.
	
\end{abstract}
 \thanks{2020\ {\it Mathematics Subject Classification.}
  32H30, 32A22.}  
\keywords{Nevanlinna theory, Homolorphic curves, Second Main Theorem, Moving targets}

\baselineskip=16truept \maketitle \pagestyle{myheadings}
\markboth{}{Non-integrated defect}
\section{introduction}

In 1929, relating to the study of value distribution theory for meromorphic functions, R. Nevanlinna \cite{Nev} conjectured that the second main theorem for meromorphic functions is still valid if one replaces the fixed points 
 by meromorphic functions of slow growth. This conjecture was solved by Osgood\cite{CF}, Steinmetz\cite{Stein} and Yamanoi\cite{Yama} with truncation one. In 1991, Ru and Stoll \cite{RuS2} established the second main theorem for linearly nondegenerate holomorphic curves and moving hyperplanes in subgeneral position.
  
\begin{theorem}[Ru and Stoll \cite{RuS2}] Let $f: \Bbb C\rightarrow \Bbb P^N (\Bbb C)$ be a holomorphic map, and let ${\mathcal H}:=\{H_1, ... ,H_q\}$ be a family of slowly moving hyperplanes with respect to $ f $ located in 
	$m$-subgeneral position.  Assume that $f$ is linearly nondegenerate over ${\mathcal K}_{{\mathcal H}}$.  Then, for any $\epsilon > 0$,
	$$\sum\limits_{j = 1}^q m_f( r,H_j) \leq_{exc} (2m-N+1 + \epsilon)T_f( r ),$$
	where “$\leq_{exc}$” means that the above inequality holds for all $ r $ outside a set with finite Lebesgue measure.
\end{theorem}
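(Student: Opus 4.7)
The plan is to follow the classical Nochka--Ru--Stoll strategy: reduce the $m$-subgeneral position setting to the general-position case via Nochka weights, then apply a Cartan-type second main theorem for moving hyperplanes in general position. The target factor $2m-N+1+\epsilon$ will emerge as $(N+1+\epsilon')/\omega$, where $\omega$ is a Nochka constant satisfying $\omega \geq (N+1)/(2m-N+1)$.

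For the Nochka reduction, I would work over the field $\mathcal{K}_{\mathcal{H}}$ generated by the coefficients of $H_1, \ldots, H_q$ and invoke Nochka's lemma, which produces rational weights $\omega_1, \ldots, \omega_q$ and a Nochka constant $\omega$ with the following key consequence: for each $z \in \mathbb{C}$ (outside an exceptional set), there is a subset $R(z) \subset \{1, \ldots, q\}$ of cardinality $N+1$ with $\{H_j : j \in R(z)\}$ in general position over $\mathcal{K}_{\mathcal{H}}$ such that, after taking logs and integrating Nochka's pointwise product inequality, one obtains
\[
\omega \sum_{j=1}^q m_f(r, H_j) \;\leq\; \sum_{j \in R} m_f(r, H_j) + O(1),
\]
with $R$ varying over finitely many candidate general-position $(N+1)$-subsets.

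The second step is to bound the right-hand side by an SMT for moving hyperplanes in general position, which I would establish via a Cartan-type Wronskian argument. For each candidate subfamily $\{H_{i_1}, \ldots, H_{i_{N+1}}\}$, form the Wronskian $W$ of $g_{i_1}(f), \ldots, g_{i_{N+1}}(f)$, where $g_j$ is a defining linear form of $H_j$ with coefficients in $\mathcal{K}_{\mathcal{H}}$; linear nondegeneracy of $f$ over $\mathcal{K}_{\mathcal{H}}$ ensures $W \not\equiv 0$. I would express $\prod_s |g_{i_s}(f)|$ in terms of $|W|$ and determinants of coefficients, then apply the logarithmic derivative lemma and the First Main Theorem; the slow-growth hypothesis $T_{g_j}(r) = o(T_f(r))$ lets one absorb all coefficient-derivative contributions into an $\epsilon' T_f(r)$ error. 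This yields
\[
\sum_{s=1}^{N+1} m_f(r, H_{i_s}) \;\leq_{exc}\; (N+1+\epsilon') T_f(r),
\]
and a finite pigeonhole over subfamilies transfers it to the varying $R(z)$ from Step 1. Combining with the Nochka inequality and dividing by $\omega$ then yields $\sum_j m_f(r, H_j) \leq_{exc} (2m-N+1+\epsilon) T_f(r)$ upon choosing $\epsilon'$ sufficiently small.

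The main obstacle lies in the moving-target adaptation of Cartan's Wronskian argument: derivatives of the slowly moving coefficients of the $H_j$ enter the Wronskian expansion alongside those of $f$, and these extraneous terms must be systematically absorbed into the $\epsilon' T_f(r)$ error using the slow-growth hypothesis. A secondary technicality is that ``general position'' for moving hyperplanes is a property over $\mathcal{K}_{\mathcal{H}}$ rather than pointwise on $\mathbb{C}$, so Nochka's subset selection must be performed generically over the function field and then interpreted meromorphically on $\mathbb{C}$; this is why working systematically over $\mathcal{K}_{\mathcal{H}}$ from the outset is convenient.
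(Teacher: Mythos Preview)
The paper does not prove this statement: it is quoted as a background result from Ru--Stoll \cite{RuS2} in the introduction, with no proof given or sketched anywhere in the paper. The paper's own contributions (Theorems~\ref{main} and~\ref{T13}) concern moving \emph{hypersurfaces} and are proved by an entirely different mechanism---Quang's distributive constant combined with the Heier--Levin subvariety-selection trick and Vojta's Nochka diagram---rather than by Nochka weights plus a Wronskian argument.

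So there is no ``paper's own proof'' to compare your proposal against. For what it is worth, your outline is a faithful summary of the original Ru--Stoll strategy: Nochka weights reduce $m$-subgeneral position to general position at the cost of the factor $(2m-N+1)/(N+1)$, and the general-position case is handled by a Cartan-type Wronskian argument adapted to moving coefficients over $\mathcal{K}_{\mathcal{H}}$. The obstacles you flag (controlling derivatives of the slowly moving coefficients, and performing the Nochka selection generically over $\mathcal{K}_{\mathcal{H}}$) are exactly the technical points Ru--Stoll address. As a sketch of the original proof your proposal is sound, but it bears no relation to anything argued in the present paper.
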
  
  
Before stating our main result, we recall some basic definitions for moving targets. We say a meromorphic function $g$ on $\mathbb C$ is of slow growth with respect to $f$ if $T_g( r ) = o(T_f( r ))$. Let $\mathcal K_f$ be the field of all meromorphic functions on $\mathbb C$ of slow growth with respect to $f$ which is a subfield of meromorphic functions on $\mathbb C$. For a positive integer $d$, we set 
$${{\mathcal I}_d}: = \left\{ I = ( i_0,...,i_N ) \in {\Bbb Z}_{\geq 0}^{N + 1} \ \big| \ {i_0} + \cdots + {i_N} = d  \right\}$$ 
and
$${n_d} = \# {\mathcal I_d} = \left( {\begin{array}{*{20}{c}}
	{d + N}  \\
	N  \
	
	\end{array} } \right).$$
A {\it moving hypersurface} $D$ in $\mathbb P^N( \mathbb C )$ of degree $d$ is defined by a homogeneous polynomial $Q = \sum\nolimits_{I \in {\mathcal I_d}} {{a_I}{{\bf x}^I}}$, where $a_I, I \in \mathcal I_d$, are holomorphic functions on $\mathbb C$ without common zeros, and ${\bf x}^I={x_0}^{i_0}\cdots{x_N}^{i_N}$.   Note that $D$ can be regarded as a holomorphic map $a:\Bbb C \to {\Bbb P}^{{n_d} - 1}( \Bbb C)$ with a reduced representation $( { ... ,{a_I}(z), ... } )_{I \in \mathcal I_d}$. We call $D$ a {\it slowly moving hypersurface} with respect to $f$ if $T_a( r ) = o(T_f( r ))$.

\begin{definition}
Under the above notations, we say that $f$ is {\it linearly nondegenerate over} $\mathcal K_f$ if there is no nonzero linear form $L \in \mathcal K_f \left[ x_0,...,x_N \right]$ such that $L(f_0,...,f_N) \equiv 0$, and $f$ is {\it algebraically nondegenerate over} $\mathcal K_f$ if there is no nonzero homogeneous polynomial $Q \in \mathcal K_f[x_0,...,x_N]$ such that $Q(f_0,...,f_N) \equiv 0$. 
\end{definition}

\begin{remark}\label{R1}
In this paper, we only consider those moving hypersurfaces $D$ with defining function $Q$ such that $Q(f_0,...,f_N) \not\equiv 0$.
\end{remark}

We say that the moving hypersurfaces $D_1, \dots,D_q$ are in $m$-{\it subgeneral position} if there exists $z \in \mathbb C$ such that $D_1(z), \dots, D_q(z)$ are in $m$-subgeneral position (as fixed hypersurfaces), i.e., any $m+1$ of $D_1(z),...,D_q(z)$ do not meet at one point. Actually, if the condition is satisfied for one point $z \in \mathbb C$, it is also satisfied for all $z \in \mathbb C$ except for a discrete set.

In this paper, we consider the moving hypersurfaces in $m$-subgeneral position and prove the following theorem. The method of proving our main theorem is motivated by the recent result of Heier-Levin \cite{HL}. 
 \begin{theorem}[Main Theorem] \label {main}
 Let $f: \Bbb C\rightarrow \Bbb P^N (\Bbb C)$ be a holomorphic map, and let $D_1, \ldots ,D_q$ be a family of slowly moving hypersurfaces  with respect to $ f $ of degree $d_1, \dots, d_q$, respectively. Assume that $f$ is algebraically nondegenerate over $\mathcal K_f$ and $D_1, \dots, D_q$ are located in $m$-subgeneral position. Then, for any $\epsilon > 0$,
	$$\sum\limits_{j = 1}^q {1\over d_j} m_f( r,D_j) \leq_{exc} {3\over 2}(2m-N+1 + \epsilon)T_f( r ).$$
	Here “$\leq_{exc}$” means that the above inequality holds for all $ r $ outside a set with finite Lebesgue measure.
\end{theorem}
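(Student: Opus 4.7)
The strategy is to reduce the $m$-subgeneral position case to the general position case via a combinatorial lemma in the spirit of Heier--Levin~\cite{HL}, and then apply a known Second Main Theorem for slowly moving hypersurfaces in general position. As a black box I would use the SMT for slowly moving hypersurfaces in general position (as in Ru--Vojta, Dethloff--Tan, or Quang): if $\{D_j\}_{j \in S}$ are slowly moving hypersurfaces of degrees $d_j$ in general position with respect to $f$ and $f$ is algebraically nondegenerate over $\mathcal{K}_f$, then for any $\epsilon>0$,
$$\sum_{j \in S} \frac{1}{d_j} m_f(r, D_j) \leq_{exc} (N+1+\epsilon)\, T_f(r).$$

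The heart of the argument would be a pointwise combinatorial inequality. For each $z \in \mathbb{C}$ away from a suitable discrete set, after reordering so that $\frac{1}{d_1}\lambda_{D_1}(f(z)) \geq \cdots \geq \frac{1}{d_q}\lambda_{D_q}(f(z))$, I aim to produce a subset $J(z) \subseteq \{1,\dots,q\}$ of cardinality at most $N+1$ such that $\{D_j(z)\}_{j \in J(z)}$ is in general position in $\mathbb{P}^N(\mathbb{C})$ and
$$\sum_{j=1}^{q} \frac{1}{d_j}\lambda_{D_j}(f(z)) \;\leq\; \frac{3(2m-N+1)}{2(N+1)}\sum_{j \in J(z)} \frac{1}{d_j}\lambda_{D_j}(f(z)).$$
The proof would follow the Heier--Levin idea of using the fact that at any point in $\mathbb{P}^N(\mathbb{C})$ at most $m$ of the $D_j$'s can vanish, then extracting an $(N+1)$-subset whose proximity sum captures a definite fraction of the total; the sharp fraction $\tfrac{2(N+1)}{3(2m-N+1)}$ is exactly what yields the factor $\tfrac{3}{2}$.

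To pass from this pointwise inequality to the integrated proximity inequality, I would use a Ru--Stoll partitioning. The domain $\mathbb{C}$ decomposes, up to a discrete set, into finitely many measurable pieces indexed by permutations of $\{1,\dots,q\}$, on each of which the chosen subset $J(z)$ is constant. Integrating over $|z|=r$, combining the pieces, and applying the general-position SMT to each of the finitely many $(N+1)$-subsets in general position, one would obtain
$$\sum_{j=1}^q \frac{1}{d_j} m_f(r, D_j) \;\leq_{exc}\; \frac{3}{2}(2m-N+1+\epsilon)\, T_f(r),$$
which is the desired conclusion.

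The main obstacle I anticipate is establishing the combinatorial lemma with the sharp constant $\tfrac{3}{2}$ in the moving-target setting: one must verify that the subset $J(z)$ can be selected so that $\{D_j(z)\}_{j \in J(z)}$ is genuinely in general position on each piece of the partition, after excising a thin set where the moving coefficients degenerate, and that the sharp fraction is preserved under this extraction. A secondary technical point is ensuring that the general-position SMT can be applied simultaneously to all the finitely many $(N+1)$-subsets with a common exceptional set of finite measure; this is standard but must be handled cleanly inside the Ru--Stoll framework for moving targets.
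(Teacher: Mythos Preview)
Your approach is genuinely different from the paper's, and the gap you yourself flag as ``the main obstacle'' is real.

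The paper never attempts a pointwise Nochka-type reduction to a general-position subfamily. Instead it invokes, as its black box, the Second Main Theorem of Shi--Yan--Yu (Theorem~\ref{Qua33}), which already delivers
\[
\sum_{j=1}^q \frac{1}{d_j}\, m_f(r, D_j)\ \leq_{exc}\ \Bigl((n+1)\,\Delta + \epsilon\Bigr) T_f(r),
\qquad
\Delta=\max_{\emptyset\subsetneq W\subsetneq V_f}\frac{\alpha(W)}{\operatorname{codim}_{V_f}W}.
\]
Everything after that is a \emph{global} combinatorial argument about codimensions of intersections: one picks a subvariety $W_0$ maximizing the slope $(n+1-\operatorname{codim}_{V_f}W)/(2m-n+1-\alpha(W))$, strips off the $\alpha(W_0)$ divisors containing $W_0$ (paying $\alpha(W_0)T_f(r)$ via the First Main Theorem, Corollary~\ref{cor}), bounds the distributive constant of the remaining family by $1/\sigma$ using Heier--Levin's claim, and finishes with Vojta's Nochka diagram to get $\alpha(W_0)+(n+1)/\sigma<\tfrac{3}{2}(2m-n+1)$. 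No Weil-function inequality at individual points $z$ is ever needed.

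Your proposed lemma, by contrast, is a pointwise statement about Weil functions, and Heier--Levin's argument does not operate at that level: it is a statement about the intersection lattice of the $D_j$, not about local heights. For hypersurfaces (unlike hyperplanes) there is no Nochka-weight mechanism to convert the subgeneral-position hypothesis into a pointwise comparison of the type you write down; this is precisely why the distributive-constant route of Quang and Shi--Yan--Yu was developed. In addition, under the weak $m$-subgeneral hypothesis (only that any $m+1$ of the $D_j(z)$ have empty common intersection) nothing guarantees you can extract an $(N{+}1)$-subset in \emph{general} position at all---smaller intersections may well have excess dimension---so both the existence of your $J(z)$ and the sharp constant $\tfrac{3(2m-N+1)}{2(N+1)}$ are unsupported. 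The paper's route sidesteps both issues by pushing all the combinatorics into the codimension bound and letting Theorem~\ref{Qua33} absorb the analytic work.
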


Indeed, we prove a more general case when $f$ is degenerate over $\mathcal K_f$.  To do so, we introduce the notion of ``universal fields". Let $k$ be a field. The {\it universal field} $\Omega_k$ of $k$ is a field extension of $k$ which is algebraically closed and has infinite transcendence degree over $k$. In this paper, we take $k=\mathcal K_f$ and denoted by $\Omega$ the universal field over $k=\mathcal K_f$. Let $f=[f_0:f_1:\cdots:f_N]$ be a reduced representation of $f$. We can regard each $f_i$, $0 \leq i \leq N$, as an element in $\Omega$. Hence $f$ can be seen as a set of homogeneous coordinates of some point $P$ in $\mathbb P ^N(\Omega)$. Equip $\mathbb P ^N(\Omega)$ with the natural Zariski topology. Let $V_f$ be the closure of $P$ in $\mathbb P ^N(\Omega)$ over $\mathcal K _f$, i.e., 
\begin{equation}\label{Vf}
	V_f: =  \cap \left\{ {P = 0\left| {P \in \mathcal K_f\left[ x_0, \ldots ,x_N \right],P(f) \equiv 0} \right.} \right\} \subseteq \mathbb P^N( \Omega).
\end{equation}
Note that $f$ is algebraically nondegenerate over $\mathcal K_f$ is equivalent to $V_f=\mathbb P ^N(\Omega)$. 
We also note that every moving hypersurface $D$ with defining function $Q\in \mathcal{K}_f[x_0, \dots, x_N]$ can be seen as a hypersurface determined by $Q$ in $\mathbb{P}^N(\Omega).$

Let $V \subset {\Bbb P}^N(\Omega)$ be an algebraic subvariety and $D_1,...,D_q$ be $q$ hypersurfaces in ${\Bbb P}^N(\Omega)$. We say that $D_1,...,D_q$ are {\it in} $m$-{\it subgeneral position on} $V$ if for any $J \subset \{1,\cdots,q\}$ with $\text{\#}J \leq m+1$,
$$\dim{ \cap _{j \in J}}{D_j} \cap V \leq m - \text{\#}J.$$
When $m=n$, we say  $D_1,\cdots,D_q$ are {\it in general position on} $V$.
Note that
$\dim{ \cap _{j \in J}}{D_j}(z) \cap V(z) = \dim{ \cap _{j \in J}}{D_j} \cap V $
for all $z \in \mathbb C$ excluding a discrete subset.

\begin{remark}
	By Lemma 3.3 in \cite{yan}, the definition of $m$-subgeneral position above implies the definition of $m$-subgeneral position below Remark \ref{R1}.
	
\end{remark}

We prove the following general result. 
\begin{theorem}\label{T13}  
	Let $f$ be a holomorphic map of ${\Bbb C}$ into ${\Bbb P}^N ({\Bbb C})$. Let $\mathcal D=\{D_1, \ldots ,D_q\}$ be a family of slowly moving hypersurfaces  in $\mathbb P ^N(\mathbb C)$ with respect to $f$ with $\deg D_j =d_j (1\leq j \leq q$). Let $V_f\subset {\Bbb P}^N(\Omega)$ be given as in (\ref{Vf}). Assume that $D_1, \dots, D_q$ are in $m$-subgeneral position on $V_f$ and $\dim V_f=n$. Assume that the following 
	Bezout property holds on $V_f$ for intersections among the divisors: If $I, J \subset \{1, ... , q\}$ then
	$$ \mbox{codim}_{V_f} D_{I\cup J}= \mbox{codim}_{V_f}(D_I\cap D_J)\leq   \mbox{codim}_{V_f}D_I+ \mbox{codim}_{V_f}D_J,$$
where for every subvariety $Z$ of $\mathbb{P}^N(\Omega),$ $\mbox{codim}_{V_f}Z$ is given by $\mbox{codim}_{V_f}Z:= \mbox{dim}V_f - \mbox{dim}V_f\cap Z. $
	Then $$\sum_{j=1}^q {1\over d_j} m_f(r, D_j)\leq_{exc} {3\over 2}(2m-n+1+\epsilon)T_f(r).$$
\end{theorem}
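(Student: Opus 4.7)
The plan is to adapt the Heier--Levin strategy for fixed hypersurfaces in $m$-subgeneral position to the moving-target setting, working throughout on the variety $V_f\subset\mathbb{P}^N(\Omega)$ introduced above.

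First, I perform a pointwise reduction to small subsets of the $D_j$. For each $z\in\mathbb{C}$ outside a discrete set, reorder the indices so that the values
\[
\frac{|Q_1(f)(z)|}{\|f(z)\|^{d_1}\|D_1(z)\|} \leq \cdots \leq \frac{|Q_q(f)(z)|}{\|f(z)\|^{d_q}\|D_q(z)\|}
\]
are increasing, so that $D_1(z),\dots,D_{m+1}(z)$ are the $m+1$ hypersurfaces closest to $f(z)$. The $m$-subgeneral position hypothesis on $V_f$ forces $\bigcap_{j=1}^{m+1} D_j\cap V_f=\emptyset$; passing to a model over a finitely generated subring of $\mathcal{K}_f$ and using that the coefficients of the $Q_j$ are of slow growth, a Lojasiewicz-type estimate gives a lower bound for $\max_{1\leq j\leq m+1}|Q_j(f)(z)|/\|f(z)\|^{d_j}$ whose logarithm has average $o(T_f(r))$. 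The standard Ru--Vojta accounting then controls $\sum_j m_f(r,D_j)/d_j$ by $(2m-n+1)$ times the maximum, over subsets $R\subset\{1,\dots,q\}$ with $\#R\leq n+1$ in general position on $V_f$, of $\sum_{j\in R}m_f(r,D_j)/d_j$.

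Next, for each such subset $R$, I eliminate the moving nature of the hypersurfaces. Enlarging $\mathcal{K}_f$ to the subfield $\mathcal{K}_R\subset\Omega$ generated over $\mathcal{K}_f$ by the coefficient functions of $\{Q_j\}_{j\in R}$, the Ru--Stoll moving-to-fixed principle, carried out over $V_f$ as a closed subvariety of $\mathbb{P}^N(\Omega)$, reduces the problem to the corresponding one for the $D_j$'s viewed as fixed hypersurfaces on $V_f$, with an $o(T_f(r))$ error coming from the slow growth. The Bezout hypothesis on $V_f$ is essential here: it ensures that the codimension identities used to construct the filtration in the next step hold uniformly on $V_f$ and descend correctly to geometric fibers.

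The heart of the proof is Heier--Levin's filtration argument, applied to the fixed hypersurfaces $\{D_j\}_{j\in R}$ on $V_f$. For $d=\operatorname{lcm}(d_j)$ and a large integer $M$, I construct a filtration of $H^0(V_f,\mathcal{O}(Md))$ indexed by monomials in the $Q_j$, $j\in R$, whose total weight---by the geometric input of Heier--Levin---beats the naive degree-counting weight by a factor of $\tfrac{2}{3}$. Feeding this filtration into Ru's general Second Main Theorem (equivalently, the Schmidt subspace theorem in Vojta's formulation) yields
\[
\sum_{j\in R}\frac{1}{d_j}m_f(r,D_j)\leq_{\exc}\frac{3}{2}(n+1+\epsilon')T_f(r).
\]
Since only finitely many subsets $R$ can arise, a pigeonhole argument selects one $R$ controlling $\sum_{j=1}^{q} m_f(r,D_j)/d_j$, and combining with the subgeneral-to-general conversion factor $(2m-n+1)/(n+1)$ from the first step gives the announced bound.

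The main obstacle will be executing the Heier--Levin filtration compatibly with the moving setup. The filtration is built from intersection data of the $D_j$'s on $V_f$, which in the moving context lives most naturally over $\Omega$; the Bezout hypothesis makes these intersections behave uniformly, but translating the algebraic filtration back into an analytic estimate on $\mathbb{C}$ requires realizing the filtration by holomorphic sections whose coefficients remain of slow growth with respect to $f$. Managing the dependence of $R$ on $z$, and ensuring the error terms from moving-to-fixed do not absorb the $3/2$ improvement, is where the bulk of the technical work will lie.
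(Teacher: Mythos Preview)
Your plan rests on two steps that do not exist in the form you need. First, the ``standard Ru--Vojta accounting'' you invoke, reducing from $m$-subgeneral position to subsets $R$ of size at most $n+1$ in general position with the conversion factor $(2m-n+1)/(n+1)$, is precisely the Nochka-weight estimate. This is known for hyperplanes but is open for hypersurfaces; if it were available here it would already give the sharp constant $2m-n+1$ with no $3/2$ at all. Second, for a subset $R$ of $\leq n+1$ hypersurfaces in \emph{general} position on an $n$-dimensional $V_f$, Ru's filtration yields the optimal bound $(n+1+\epsilon)T_f(r)$; there is no filtration of $H^0(V_f,\mathcal{O}(Md))$ whose weight ``beats the naive count by $2/3$'', and Heier--Levin prove no such thing. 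You have placed the $3/2$ on the wrong side of the argument: it is not an analytic gain in the general-position endgame, it is the combinatorial cost of bypassing Nochka weights in the subgeneral-position reduction.

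The paper's proof is entirely different and much shorter. It takes as a black box the Second Main Theorem of Shi--Yan--Yu (Theorem~\ref{Qua33}) with Quang's distributive constant $\Delta=\max_{W}\alpha(W)/\operatorname{codim}_{V_f}W$, together with its immediate corollary after stripping off the $\alpha(W_0)$ divisors containing a chosen $W_0$ (Corollary~\ref{cor}). The work is then purely combinatorial: if $\Delta\leq(2m-n+1)/(n+1)$ one is done; otherwise one picks $W_0$ maximizing the slope $\sigma=\dfrac{n+1-\operatorname{codim}_{V_f}W_0}{2m-n+1-\alpha(W_0)}$, uses the Bezout hypothesis (via the Heier--Levin slope inequality) to bound $(\alpha(W)-\alpha(W\cup W_0))/\operatorname{codim}_{V_f}W$ by $1/\sigma$, and finishes with Vojta's Nochka diagram to see that the point $(\alpha(W_0),\operatorname{codim}_{V_f}W_0)$ lies below the intersection of $y=\tfrac{n+1}{2m-n+1}x$ and $y=x+n-m$, forcing $\alpha(W_0)<(2m-n+1)/2$ and $\sigma>(n+1)/(2m-n+1)$, hence $\alpha(W_0)+(n+1)/\sigma<\tfrac{3}{2}(2m-n+1)$. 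No pointwise reordering, no moving-to-fixed reduction, and no new filtration are needed.
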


\begin{remark}   
	Recall that we only consider those moving hypersurfaces $D$ with defining function $Q \in \mathcal K_f [x_0,...,x_N]$ such that $Q(f) \not\equiv 0$. So we have $V_f \not\subset D_j$ for every $ 1 \leq j \leq q$.
\end{remark}

It is known that the Bezout property holds on projective spaces. Therefore Theorem \ref{main} is the special case  of Theorem \ref{T13} when $V_f={\Bbb P}^N(\Omega)$.  Therefore the rest part of the paper is devoted to prove Theorem \ref{T13}.

\section{Proof of Theorem \ref{T13}}
We begin with some background materials about moving targets. Let $f: \mathbb C \to \mathbb P^N (\mathbb C)$ be a holomorphic map. The characteristic function of $f$ is defined by 
$${T_f}( r ) = \int_0^{2\pi } {\log \left\| {{\bf f}( re^{i\theta }}) \right\|} \frac{d\theta }
{2\pi },$$
where ${\bf f} = (f_0,...,f_N)$ is a reduced representation of $f$ with entire functions $f_0,...,f_N$ having no common zeros and $\left\| {{\bf f}( z )} \right\| = \max \left\{ {\left| {f_0(z)} \right|,...,\left| {f_N(z)} \right|} \right\}$. 
The {\it proximity function} of $f$ with respect to the moving hypersurface $D$ defined by a homogeneous polynomial $Q$ is defined by 
$$m_f(r,D) = \int_0^{2\pi } \lambda _{D( re^{i\theta })}({\bf f}( re^{i\theta } ))\frac{d\theta }{2\pi }$$
where $\lambda _{D(z)}({\bf f}(z)) = \log \frac{{{\left\| {{\bf f}( z )} \right\|}^d}\left\| {Q( z )} \right\|}
{\left\| {Q( {\bf f} )( z )} \right\|}$ is the  Weil function associated to $D$ composites with $f$  and $\left\| {Q( z )} \right\| = \mathop {\max }\limits_{I \in \mathcal I_d} \left\{ {\left| {a_I( z )} \right|} \right\}$. If $D$ is a slowly moving hypersurface with respect to $f$ of degree $d$, we have 
$$m_f(r,D) \leq dT_f(r)+o(T_f(r))$$
by the first main theorem for moving targets.

In 2022, Quang \cite{Qua22} introduced the notion of distributive constant $\Delta$ as follows:
\begin{definition}\label{D21}
	Let $f: \mathbb C \to \mathbb P^N (\mathbb C)$ be a holomorphic curve. Let $D_1,\cdots,D_q$ be $q$ hypersurfaces in ${\Bbb P}^N(\Omega)$. Let $V_f$ be given as in (\ref{Vf}). We define the distributive constant for $D_1, ... ,D_q$ with respect to $f$ by 
	$$\Delta: = \max_{\Gamma\subset \{1, ... , q\}}{\#\Gamma\over \mbox{codim}_{V_f} (\bigcap_{j\in \Gamma}D_j)}.$$
\end{definition}

We remark that Quang's original definition (See Definition 3.3 in \cite{Qua22}) is different from Definition \ref{D21}. But by Lemma 3.3 in \cite{yan}, we can see that Definition \ref{D21} is equivalent to Definition 3.3 in \cite{Qua22}. We re-phrase the definition, according to Heier-Levin \cite{HL}, as follows:

\begin{definition}\label{D22}
	With the assumptions and notations in Definition \ref{D21}, for a closed subset $W$ of $V_f$, let
	$$\alpha(W)=\#\{j~|~W\subset \mbox{Supp}D_j\}.$$
	We define
	$$\Delta: =\max_{\emptyset \subsetneq W\subsetneq V_f}{\alpha(W)\over \mbox{codim}_{V_f} W}.$$
\end{definition}

We show that the above two definitions are equivalent. Suppose that $\tilde{W}$ is a subvariety of $V_f$ such that ${\alpha(W)\over \mbox{codim}_{V_f} W}$ attains the maximum at $W = \tilde{W}.$ Reordering if necessary, we assume that $\tilde{W} \subset D_j$ for $j = 1, \dots, \alpha(\tilde{W}).$ Let $W' = \cap_{j=1}^{\alpha(\tilde{W})}D_j.$ Then, clearly, $\tilde{W} \subset W'$ and hence $\mbox{codim}_{V_f} \tilde{W} \geq \mbox{codim}_{V_f} W'.$ On the other hand, $\tilde{W} \nsubseteq D_j$ for all $j > \alpha(\tilde{W})$ implies that $W' \nsubseteq D_j$ for all $j > \alpha(\tilde{W})$. So $\alpha(\tilde{W}) = \alpha(W').$ Thus we have 
$$
{\alpha(W')\over \mbox{codim}_{V_f} W'} \geq {\alpha(\tilde{W})\over \mbox{codim}_{V_f} \tilde{W}}.
$$
By our assumption for $\tilde{W}$, we get
$$
{\alpha(W')\over \mbox{codim}_{V_f} W'} = {\alpha(\tilde{W})\over \mbox{codim}_{V_f} \tilde{W}}.
$$
This means that, in Definition \ref{D22}, we only need to consider those $W$ which are the intersections of some $D_i$'s, and our claim follows from this observation. In the following, when we deal with $W$, we always assume that $W$ is the intersection of some $D_i$'s.

S.D. Quang obtained the following result.

\begin{theorem}[S.D. Quang, Lei Shi, Qiming Yan and Guangsheng Yu \cite{SYY}]\label{Qua33}  Let $f$ be a holomorphic map of ${\Bbb C}$  into ${\Bbb P}^N ({\Bbb C})$. 
	Let $\{D_j\}_{j=1}^q$ be a family of slowly moving hypersurfaces in ${\Bbb P}^N({\Bbb C})$ with $\deg D_j =d_j (1\leq j \leq q$). 
	Let $V_f \subset \mathbb P^N(\Omega)$ be given as in (\ref{Vf}). Assume that $\dim V_f=n$.
	Then, for any $\epsilon>0$, 
	$$\sum_{j=1}^q {1\over d_j} m_f(r, D_j)\leq_{exc} \left((n+1)\max_{\emptyset \subsetneq W\subsetneq V_f}{\alpha(W)\over \mbox{codim}_{V_f} W}+\epsilon\right)T_f(r).$$
\end{theorem}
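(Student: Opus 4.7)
The plan is to combine Quang's distributive-constant reduction with the Evertse--Ferretti filtration method and the Ru--Stoll Second Main Theorem for moving hyperplanes (\cite{RuS2}). The strategy is: (i) reduce to common degree $d$; (ii) for each generic $z$, use $\Delta$ to reduce the sum of Weil functions over all $q$ hypersurfaces to a sum over at most $n+1$ hypersurfaces in general position on $V_f$; (iii) apply an Evertse--Ferretti filtration on $V_f$ to translate the resulting SMT into one for moving hyperplanes on a higher-dimensional projective space.

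\textbf{Steps (i) and (ii).} Replacing $D_j$ with $D_j^{d/d_j}$ for $d = \mbox{lcm}(d_1,\dots,d_q)$ preserves the quantities $\frac{1}{d_j} m_f(r, D_j)$, $\alpha(W)$, and $\mbox{codim}_{V_f}W$, so I may assume every $D_j$ has degree $d$. For each generic $z$, reorder so that $\lambda_{D_1(z)}({\bf f}(z)) \geq \lambda_{D_2(z)}({\bf f}(z)) \geq \cdots$, then run a greedy selection (cf.\ Definition \ref{D22}): set $\sigma(1) = 1$ and inductively $\sigma(i+1) = \min\{j : \dim(W_i \cap D_j) < \dim W_i\}$, where $W_i = \bigcap_{\ell \leq i} D_{\sigma(\ell)}$. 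This terminates at some $k \leq n+1$, and applying Definition \ref{D22} to each intermediate $W_i$ yields
\begin{equation*}
\sum_{j=1}^q \lambda_{D_j(z)}({\bf f}(z)) \leq \Delta \sum_{i=1}^{k} \lambda_{D_{\sigma(i)}(z)}({\bf f}(z)) + O(1).
\end{equation*}
By construction, $D_{\sigma(1)}, \ldots, D_{\sigma(k)}$ are in general position on $V_f$.

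\textbf{Step (iii).} For any ordered choice of $k \leq n+1$ hypersurfaces in general position on $V_f$, I would establish
\begin{equation*}
\sum_{i=1}^{k} \frac{1}{d} m_f(r, D_{\sigma(i)}) \leq_{exc} (n+1 + \epsilon') T_f(r).
\end{equation*}
For large $N$, construct a filtration of $H^0(V_f, \mathcal{O}_{V_f}(Nd))$ by powers of the ideal $(Q_{\sigma(1)}, \ldots, Q_{\sigma(k)})$, producing (by Evertse--Ferretti) a basis $\{\phi_\ell\}_{\ell=1}^{H_{V_f}(N)}$ with coefficients in $\mathcal{K}_f$ satisfying
\begin{equation*}
\sum_{\ell=1}^{H_{V_f}(N)} \log \frac{\|{\bf f}(z)\|^{Nd}}{|\phi_\ell({\bf f}(z))|} \geq \left( \frac{N H_{V_f}(N)}{n+1} - o(N H_{V_f}(N)) \right) \sum_{i=1}^{k} \frac{\lambda_{D_{\sigma(i)}(z)}({\bf f}(z))}{d}.
\end{equation*}
Then $F := [\phi_1({\bf f}):\cdots:\phi_{H_{V_f}(N)}({\bf f})]: \mathbb{C} \to \mathbb{P}^{H_{V_f}(N)-1}(\mathbb{C})$ is linearly nondegenerate over $\mathcal{K}_f$ with $T_F(r) = N d T_f(r) + o(T_f(r))$. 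Applying \cite{RuS2} to $F$ with the $H_{V_f}(N)$ slowly moving coordinate hyperplanes, integrating the above pointwise inequality, dividing by $N H_{V_f}(N)/(n+1)$, and letting $N \to \infty$ yields the displayed bound.

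Combining Steps (ii) and (iii) over the finitely many possible greedy selections gives $\sum_{j=1}^q \frac{1}{d_j} m_f(r, D_j) \leq_{exc} ((n+1)\Delta + \epsilon) T_f(r)$. The main technical difficulty lies in Step (iii): the polynomials $Q_{\sigma(i)}$ have coefficients in $\mathcal{K}_f$ rather than $\mathbb{C}$, so the Evertse--Ferretti filtration lives over the universal field $\Omega$ while all Nevanlinna estimates take place on $\mathbb{C}$. Verifying that the resulting basis $\{\phi_\ell\}$ indeed has coefficients in $\mathcal{K}_f$ (so that the coordinate hyperplanes are slowly moving with respect to $F$, allowing \cite{RuS2} to apply with negligible error) is the most delicate point, alongside uniform control of the $o(NH_{V_f}(N))$ error terms as $N \to \infty$.
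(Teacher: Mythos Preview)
The paper does not prove this statement: Theorem~\ref{Qua33} is quoted from \cite{SYY} and used as a black box (via Corollary~\ref{cor}) in the proof of Theorem~\ref{T13}. There is therefore no proof in the paper to compare your proposal against.

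That said, your outline is the expected route to such a result and is essentially the strategy of Quang \cite{Qua22} adapted to the moving-target setting of \cite{SYY}: reduce to common degree, use the distributive constant to pass from $q$ hypersurfaces to at most $n+1$ in general position on $V_f$ at each point, then apply an Evertse--Ferretti/Ru filtration over $\Omega$ and feed the resulting moving hyperplanes into \cite{RuS2}. Two places in your sketch would need tightening in an actual write-up. First, the inequality $\sum_j \lambda_{D_j} \leq \Delta \sum_i \lambda_{D_{\sigma(i)}} + O(1)$ in Step~(ii) does not follow from the greedy selection alone; one needs an Abel-summation-type argument using $\sigma(i+1)-1 \leq \Delta\cdot i$ for each $i$, and the correct bound involves a telescoping that is slightly more delicate than what you wrote. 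Second, in Step~(iii) the map $F$ is linearly nondegenerate over $\mathcal K_f$ only after one checks that the filtration basis $\{\phi_\ell\}$ can be chosen with coefficients in $\mathcal K_f$ (not merely in $\Omega$) and that $V_f$, being defined over $\mathcal K_f$, has Hilbert function computable over $\mathcal K_f$; you flag this yourself, and it is indeed the crux of the moving-target version. Both issues are handled in \cite{Qua22} and \cite{SYY}, so your proposal is a fair summary of the existing argument rather than a new one.
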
 
We derive the following corollary of the Theorem \ref{Qua33}. 
   \begin{corollary}\label{cor}We adopt the assumption in Theorem \ref{Qua33}. Let $W_0$ be a closed subset of $V_f\subset {\Bbb P}^N(\Omega)$. Then, for any $\epsilon > 0,$ we have
   $$\sum_{j=1}^q {1\over d_j} m_f(r, D_j)\leq_{exc} \left(\alpha(W_0)+(n+1)\max_{\phi  \subsetneq W \subsetneq {V_f}} {\alpha(W)-\alpha(W\cup W_0)\over  \mbox{codim}_{V_f} W}+\epsilon\right)T_{f}(r).$$
   \end{corollary}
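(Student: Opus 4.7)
The plan is to split the sum on the left-hand side according to whether $W_0 \subset D_j$ or not. Set $\Gamma_0 := \{j : W_0 \subset D_j\}$, which by definition has cardinality $\alpha(W_0)$, and let $\Gamma_1 := \{1,\ldots,q\} \setminus \Gamma_0$. For each $j \in \Gamma_0$, the First Main Theorem for slowly moving hypersurfaces (recalled in Section 2) gives $\tfrac{1}{d_j} m_f(r, D_j) \leq T_f(r) + o(T_f(r))$, so the contribution of the $\Gamma_0$-indices to the sum is at most $\alpha(W_0) T_f(r) + o(T_f(r))$.

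For the remaining indices I would apply Theorem \ref{Qua33} directly to the subfamily $\{D_j\}_{j \in \Gamma_1}$, which still consists of slowly moving hypersurfaces in $\mathbb{P}^N(\mathbb{C})$ and requires no further hypotheses. Write $\alpha_1(W) := \#\{j \in \Gamma_1 : W \subset D_j\}$ for the analogue of $\alpha$ attached to this subfamily. Then Theorem \ref{Qua33} yields
$$\sum_{j \in \Gamma_1} \frac{1}{d_j} m_f(r, D_j) \leq_{exc} \left((n+1)\max_{\emptyset \subsetneq W \subsetneq V_f} \frac{\alpha_1(W)}{\mbox{codim}_{V_f} W} + \epsilon\right) T_f(r).$$

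The crux of the proof is then the combinatorial identity $\alpha_1(W) = \alpha(W) - \alpha(W \cup W_0)$. To verify it, observe that a divisor $D_j$ contains $W \cup W_0$ if and only if it contains both $W$ and $W_0$, so $\{j : W \cup W_0 \subset D_j\} = \{j : W \subset D_j\} \cap \Gamma_0$; therefore the index set $\{j : W \subset D_j\}$ partitions into those indices in $\Gamma_0$ (contributing exactly $\alpha(W \cup W_0)$) and those in $\Gamma_1$ (contributing $\alpha_1(W)$). Substituting this identity into the displayed inequality and adding the $\Gamma_0$-contribution $\alpha(W_0) T_f(r) + o(T_f(r))$ produces the stated bound.

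Overall the argument is just a reorganization of Theorem \ref{Qua33}: the main point to get right is the identity $\alpha_1(W) = \alpha(W) - \alpha(W \cup W_0)$, and the only subtlety is noticing that Theorem \ref{Qua33} applies to arbitrary subfamilies without any position assumption, so the bound on $\sum_{j \in \Gamma_1}$ comes at no additional cost.
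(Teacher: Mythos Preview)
Your proposal is correct and follows essentially the same argument as the paper: split off the divisors containing $W_0$, bound their contribution by the First Main Theorem, apply Theorem~\ref{Qua33} to the remaining subfamily, and identify the resulting $\alpha_1(W)$ with $\alpha(W)-\alpha(W\cup W_0)$. The only difference is cosmetic---the paper reorders so that $\Gamma_0=\{q'+1,\dots,q\}$---and your justification of the combinatorial identity is in fact slightly more detailed than the paper's.
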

   \begin{proof} 
   	Without loss of generality, we suppose that $W_0 \subset  \mbox{Supp} D_j$ for $j = q-\alpha(W_0)+1, \dots, q$.  Let $q'=q-\alpha(W_0)$. Let 
   $$\alpha'(W)=\#\{i\leq q'~|~W\subset     \mbox{Supp}D_i\}.$$
   Note that  $\alpha'(W) = \alpha(W)-\alpha(W \cup W_0)$. 
   Then by the first main theorem for moving targets and $\alpha(W_0)=q-q'$,
    $$\sum_{i=q'+1}^q {1\over d_i} m_f(r, D_i)\leq (\alpha(W_0)+\epsilon)T_{f}(r).$$
 Thus, Theorem \ref{Qua33} implies that 
 \begin{eqnarray*}\sum_{j=1}^q {1\over d_j} m_f(r, D_j)&=&  \sum_{i=1}^{q'} {1\over d_i} m_f(r, D_i) +\sum_{i=q'+1}^q {1\over d_i} m_f(r, D_i)   \\
 &\leq_{exc}& \left((n+1)\max_{\emptyset \subsetneq W\subsetneq V_f}{\alpha'(W)\over \mbox{codim}_{V_f} W}+\epsilon\right) T_{f}(r) \\
 &~&\\
 &~& + \Big(\alpha(W_0)+\epsilon\Big)T_{f}(r)
 \end{eqnarray*}
 which is we desired.
 \end{proof}
 
 \noindent{\it Proof of Theorem \ref{T13}}.
 	
\begin{proof} 
	We divide the proof into two cases. The first case is that for every algebraic subvariety $W \subset V_f\subset {\Bbb P}^N(\Omega)$ with $W\not=\emptyset$, we have $\mbox{codim}_{V_f} W \ge {n+1\over 2m-n+1}\alpha(W)$. Then, by Definition \ref{D22}, we have $\Delta \leq {2m-n+1 \over n+1}$. So the result  follows easily from Theorem \ref{Qua33}. 
	
	 Otherwise, we take a subvariety $W_0 \subset V$ such that the quantity
 $${n+1-\mbox{codim}_{V_f} W\over 2m-n+1-\alpha(W)}$$
 is maximized at $W = W_0$. We assume that $W_0$ is an intersection 
 $D_I$ for some $I \subset \{1, \cdots , q\}$. Let 
 $$\sigma:={n+1-\mbox{codim}_{V_f} W_0\over 2m-n+1-\alpha(W_0)}.$$ 
 Note that $\sigma$ is the slope of the straight line passing through $(2m-n+1,n+1)$ and $(\alpha(W_0), \mbox{codim}_{V_f}W_0)$.  
 
 Take arbitrary $\emptyset \subsetneq  W \subsetneq V_f$. By Corollary \ref{cor},  it suffices to show that 
 $$ \alpha(W_0)+(n+1){\alpha(W)-\alpha(W\cup W_0)\over  \mbox{codim}_{V_f} W}\leq {3\over 2}(2m-n+1).$$  
  
  Assume that $W = D_J$ for some nonempty $J \subset \{1,...,q\}$  (the case $\alpha(W ) = 0, J = \emptyset$, follows from $m$-subgeneral position).
  From the claim in page 19 of Heier-Levin \cite{HL} (apply the same argument in \cite{HL}), 
  we have $${\alpha(W_0)-\alpha(W\cup W_0)\over \mbox{codim}_{V_f} W}\leq {1\over \sigma}.$$
    Hence
    \begin{equation}\label{new}\alpha(W_0)+(n+1){\alpha(W)-\alpha(W\cup W_0)\over  \mbox{codim}_{V_f} W}\leq \alpha(W_0)+{n+1\over \sigma}.\end{equation}
    Finally, consider Vojta's Nochka-weight-diagram \cite{Voj07} (See the figure below).
    
    \begin{tikzpicture}
    \draw[<->] (-0.5, 0) -- (8, 0) node[right] {$\alpha(W)$};
    \draw[<->] (0, -0.5) -- (0, 5) node[above] {$codim \textit{L}$};
    \draw[scale=2, domain=1:3, smooth, variable=\x, black] plot ({\x}, {\x - 1});
    \draw[scale=2, domain=0:4, smooth, variable=\x, black] plot ({\x}, {0.5 * \x});
    \draw[scale=2, domain=3:4, smooth, variable=\x, black] plot ({\x}, {2});
    \filldraw (4,2) circle[radius=1.5pt] node[above left, outer sep=2pt] {$Q$};
    
    \filldraw (0.7,0.1) circle[radius=1.5pt] node[above left, outer sep=2pt] (p1) {$P_1$};
    \filldraw (1.5,0.3) circle[radius=1.5pt] node[above left, outer sep=2pt] (p2) {$P_2$};
    \filldraw (2.3,0.6) circle[radius=1.5pt] node[above left, outer sep=2pt] (p3) {$P$};
    
    \draw[dotted, line width=1.5pt] (p2) -- (p3);
    \draw[-] (0,0) -- (0.7,0.1);
    \draw[-] (0.7,0.1) -- (1.5,0.3);
    \draw[-] (1.5,0.3) -- (2.3,0.6);
    \draw[-] (2.3,0.6) -- (8,4);
    
    \filldraw (6,4) circle[radius=1.5pt] node[above left, outer sep=2pt] (Y) {$(m+1,n+1)$};
    \filldraw (8,4) circle[radius=1.5pt] node[above, outer sep=2pt] (X) {$(2m-n+1,n+1)$};
    \end{tikzpicture}
    
We note that from our assumption that $\mathrm {codim}_{V_f} W_0 < {n+1 \over 2m-n+1} \alpha (W_0)$, $P = (\alpha(W_0), \mbox{codim}_{V_f} W_0)$ lies below
the line $y = {n+1\over 2m-n+1} x$. From $m$-subgeneral position, it also lies to the left of the
the line $y=x+n-m$. Therefore $P$ must lie below and to the left of the intersection point $Q = \left({2m-n+1\over 2} , {n+1\over 2}\right)$ of the above two straight lines. 
Thus, we have
  \begin{equation}\label{E1}
  \alpha(W_0)<{2m-n+1\over 2},
  \end{equation}
  $$\mbox{codim}_{V_f} W_0<{n+1\over 2}.$$
  Since $\sigma > {n+1\over 2m-n+1}$ (See the figure), by using (\ref{E1}), we obtain
   \begin{eqnarray*}
   	\alpha(W_0)+ {n+1\over \sigma} &<& \alpha(W_0)+ 2m-n+1\\
   	&<& {2m-n+1 \over 2} + 2m-n+1\\
   	&=&{3\over 2}(2m-n+1).
  \end{eqnarray*}
Combing above with (\ref{new}), we get
$$\alpha(W_0)+(n+1){\alpha(W)-\alpha(W\cup W_0)\over  \mbox{codim}_{V_f} W} \leq {3\over 2}(2m-n+1).$$
The theorem thus follows from Corollary \ref{cor}. \end{proof}

\end{document}